\newcommand{\dntri}{\bigtriangledown}
\newcommand{\uptri}{\triangle}
\newcommand{\mo}{\mathfrak{o}}
\newcommand{\PP}{\mathbb{P}}
\newcommand{\ZZ}{\mathbb{Z}}
\DeclareMathOperator{\per}{perm}
\DeclareMathOperator{\syz}{syz}
\DeclareMathOperator{\soc}{soc}
\newcommand{\st}{\; \mid \;}
\def\urltilda{\kern -.15em\lower .7ex\hbox{\~{}}\kern .04em}
\numberwithin{figure}{section}
\numberwithin{equation}{section}
\newtheorem{theorem}{Theorem}[section]
\newtheorem{lemma}[theorem]{Lemma}
\newtheorem{proposition}[theorem]{Proposition}
\newtheorem{corollary}[theorem]{Corollary}
\theoremstyle{definition}
\newtheorem{remark}[theorem]{Remark}
\newtheorem{example}[theorem]{Example}
\begin{document}

\title{Some algebras with the weak Lefschetz property}

\author[D.\ Cook II]{David Cook II}
\address{Department of Mathematics \& Computer Science, Eastern Illinois University, Charleston, IL 46616}
\email{\href{mailto:dwcook@eiu.edu}{dwcook@eiu.edu}}

\author[U.\ Nagel]{Uwe Nagel}
\address{Department of Mathematics, University of Kentucky, 715 Patterson Office Tower, Lexington, KY 40506-0027}
\email{\href{mailto:uwe.nagel@uky.edu}{uwe.nagel@uky.edu}}

\thanks{    
The second author was  partially supported by the National Security Agency under Grant Number H98230-12-1-0247
    and by the Simons Foundation under grants \#208869 and \#317096.}

\keywords{Monomial ideals, weak Lefschetz property, lozenge tilings}
\subjclass[2010]{05B45, 05E40, 13E10}

\maketitle

\begin{center}
{\em Dedicated to Winfried Bruns at the occasion of his $70^{th}$ birthday.}
\end{center}

\begin{abstract}
    Using a connection to lozenge tilings of triangular regions, we establish an easily checkable criterion that guarantees  the weak Lefschetz property of a quotient by a  monomial ideal.  It is also shown that each such ideal also has a  semistable syzygy bundle.
\end{abstract}

\section{Introduction} \label{sec:intro}

Recently, there have been many investigations of the presence of the weak Lefschetz property (see, e.g., \cite{BMMNZ, BMMNZ2, BK,  HSS,  KRV,  LZ, MMN, MMN-2012,MN-survey, Stanley-1980}).
A standard graded Artinian algebra $A$ over a field $K$ is said to have the \emph{weak Lefschetz property}
if there is a linear form $\ell \in A$ such that the multiplication map $\times \ell : [A]_i \rightarrow [A]_{i+1}$ has maximal rank
for all $i$ (i.e., it is injective or surjective).   The name is a reminder that
the Hard Lefschetz Theorem guarantees this property for the cohomology ring of a compact K\"ahler manifold. It is a desirable property, e.g., 
    its presence constrains the Hilbert function (see~\cite{HMNW}).
Many algebras are expected to have the weak Lefschetz property. However, establishing this fact is often very challenging.  
The recent lecture notes of Harima, Maeno, Morita, Numata, Wachi, and Watanabe~\cite{HMMNWW} and the survey \cite{MN-survey}  provide an excellent review of the 
state of knowledge on the Lefschetz properties. 

The authors have developed a combinatorial approach towards deciding the presence of the weak Lefschetz property for monomial algebras in three variables in \cite{CN-resolutions, CN-small-type}. It relies on a study of lozenge tilings of so-called triangular regions. The needed results are recalled in  Subsection~\ref{sub:tri}. This approach has been used, for example,   to investigate the  weak Lefschetz property  of monomial algebras of type two  \cite{CN-small-type} and quotients by  ideals with fours generators \cite{CN-amaci}. Furthermore, we showed in \cite{CN-amaci} that there is a connection between lozenge tilings and semistability of  syzygy bundles  (see Theorem~\ref{thm:wlp-biadj}). Here we use these methods to establish sufficient conditions for the presence of the weak Lefschetz property as well as the semistability of the syzygy bundle (see Theorem \ref{thm:wlp-iff-semistab}). These conditions are easily expressed and checked using triangular regions (see Remark~\ref{rem:pictural description}). As a consequence, we describe infinite families of ideals which satisfy these conditions. The number of generators of these ideals can be arbitrarily large. As a simple example, consider the following ideal 
\[
I = (x^{12}, \ x^6 y^2 z^3, \ x^3 y^2 z^7, \ x y^7 z^3, \ x y^5 z^5, \ x y z^9, \ y^{12}, \ z^{12}). 
\]
Our criterion immediately gives that its quotient has the weak Lefschetz property.

\section{Algebraic triangular regions}
     \label{sec:trireg} 

We recall facts needed to establish our main results in the following section. 

Let $R = K[x,y,z]$ be the standard graded polynomial ring over the field $K$. Unless specified otherwise, $K$ is always a field of characteristic zero. All $R$-modules are assumed to be finitely generated and graded. 

Let $I$ be a monomial ideal of $R$. Then  $A = R/I = \oplus_{j \ge 0} [A]_j$ is the sum of  finite vector spaces, called the \emph{homogeneous components (of $A$) of degree $j$}.
The \emph{Hilbert function} of $A$ is the function $h_A: \ZZ \to \ZZ$ given by $h_A(j) = \dim_K [A]_j$. The
\emph{socle} of $A$, denoted $\soc{A}$, is the annihilator of $\mathfrak{m} = (x, y, z)$, the homogeneous
maximal ideal of $R$, that is, $\soc{A} = \{a \in A \st a \cdot \mathfrak{m} = 0\}$. 

\subsection{Triangular regions labeled by monomials}\label{sub:tri}~

Now we briefly review a connection between monomial ideals and triangular regions; for a more thorough discussion see~\cite{CN-resolutions}.

For an integer $d \geq 1$, the triangular region is the \emph{triangular region (of $R$) in degree $d$}, denoted $\mathcal{T}_d$,
is an equilateral triangle of side length $d$ composed of $\binom{d}{2}$ downward-pointing ($\dntri$) and $\binom{d+1}{2}$ 
upward-pointing ($\uptri$) equilateral unit triangles.  These triangles are labeled by the monomials in $[R]_{d-2}$ and $[R]_{d-1}$, respectively,
as follows:  place $x^{d-1}$ at the top $y^{d-1}$ at the bottom-left, and $z^{d-1}$ at the bottom-right; the remaining
labels are found via interpolation. See Figure~\ref{fig:triregion-R}(i) for an illustration.

\begin{figure}[!ht]
    \begin{minipage}[b]{0.48\linewidth}
        \centering
        \includegraphics[scale=1]{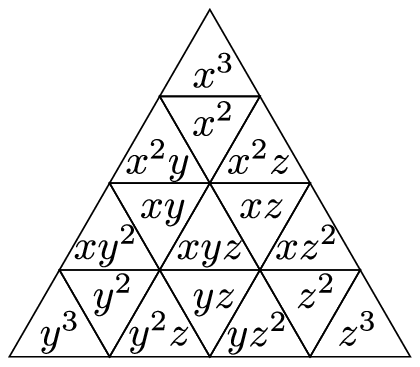}\\
        \emph{(i) $\mathcal{T}_4$}
    \end{minipage}
    \begin{minipage}[b]{0.48\linewidth}
        \centering
        \includegraphics[scale=1]{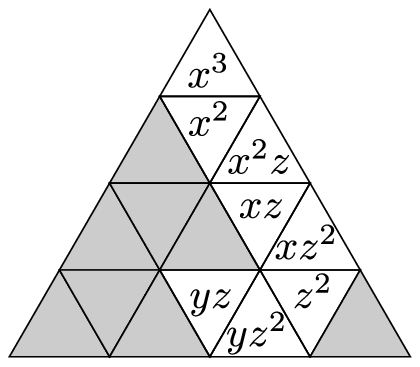}\\
        \emph{(ii) $T_4(xy, y^2, z^3)$}
    \end{minipage}
    \caption{A triangular region with respect to $R$ and with respect to $R/I$.}
    \label{fig:triregion-R}
\end{figure}

The \emph{triangular region (of $R/I$) in degree $d$}, denoted by $T_d(I)$, is the part of $\mathcal{T}_d$ that is obtained
after removing the triangles labeled by monomials in $I$.  Note that the labels of the downward- and
upward-pointing triangles in $T_d(I)$ form $K$-bases of $[R/I]_{d-2}$ and $[R/I]_{d-1}$, respectively.
See Figure~\ref{fig:triregion-R}(ii) for an example. 

Notice that the regions missing from $\mathcal{T}_d$ in $T_d(I)$ can be viewed as a union of (possibly overlapping)
upward-pointing triangles of various side lengths that include the upward- and downward-pointing triangles inside them.
Each of these upward-pointing triangles corresponds to a minimal generator $x^a y^b z^c$ of $I$ that has, necessarily, degree at most
$d-1$.  The value $d-(a+b+c)$ is the \emph{side length of the puncture associated to $x^a y^b z^c$}, regardless of possible
overlaps with other punctures.  See Figure~\ref{fig:triregion-punctures} for an example.

\begin{figure}[!ht]
    \begin{minipage}[b]{0.48\linewidth}
        \centering
        \includegraphics[scale=1]{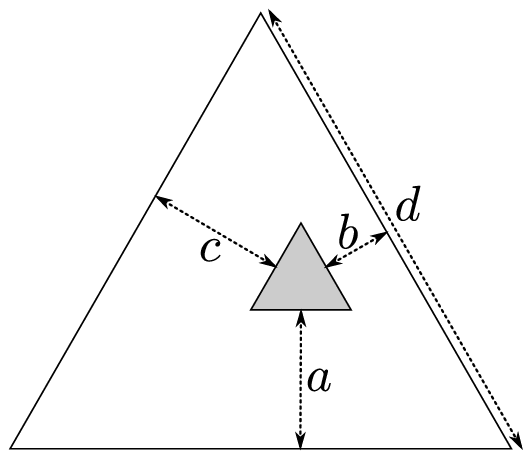}\\
        \emph{(i) $T_{d}(x^a y^b z^c)$}
    \end{minipage}
    \begin{minipage}[b]{0.48\linewidth}
        \centering
        \includegraphics[scale=1]{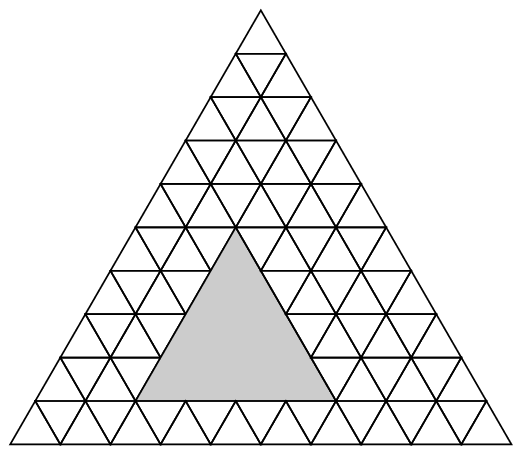}\\
        \emph{(ii) $T_{10}(xy^3z^2)$}
    \end{minipage}
    \caption{$T_d(I)$ as constructed by removing punctures.}
    \label{fig:triregion-punctures}
\end{figure}

We say that two punctures \emph{overlap} if they share at least an edge. Two punctures are said to be \emph{touching}
if they share precisely a vertex.

\newpage

\subsection{Tilings \& the weak Lefschetz property}\label{sub:tiling}~

A \emph{lozenge} is a union of two unit equilateral triangles glued together along a shared edge, i.e., a rhombus with
unit side lengths and angles of $60^{\circ}$ and $120^{\circ}$. Lozenges are also called calissons and diamonds in the
literature.  See Figure~\ref{fig:triregion-intro}.

\begin{figure}[!ht]
    \begin{minipage}[b]{0.48\linewidth}
        \centering
        \includegraphics[scale=1]{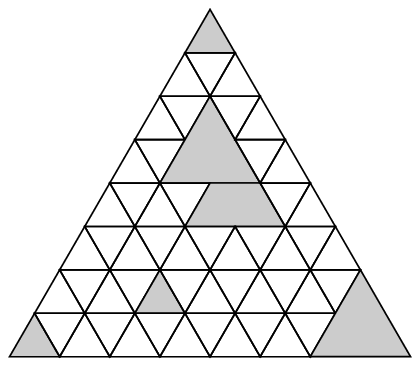}
    \end{minipage}
    \begin{minipage}[b]{0.48\linewidth}
        \centering
        \includegraphics[scale=1]{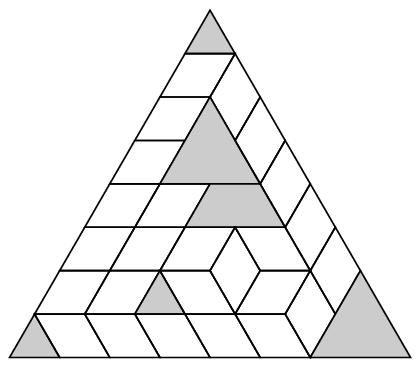}
    \end{minipage}
    \caption{A triangular region $T \subset \mathcal{T}_8$ together with on of its $13$ tilings.}
    \label{fig:triregion-intro}
\end{figure}

Fix a positive integer $d$ and consider the triangular region $\mathcal{T}_d$ as a union of unit triangles. Thus a \emph{subregion}
$T \subset \mathcal{T}_d$ is a subset of such triangles. We retain their labels. As above, we say that
a subregion $T$ is \emph{$\dntri$-heavy}, \emph{$\uptri$-heavy}, or \emph{balanced} if there are more downward pointing
than upward pointing triangles or less, or if their numbers are the same, respectively. A subregion is \emph{tileable}
if either it is empty or there exists a tiling of the region by lozenges such that every triangle is part of exactly one
lozenge.  Since a lozenge in $\mathcal{T}_d$ is the union of a downward-pointing and an upward-pointing triangle, and every
triangle is part of exactly one lozenge, a tileable subregion is necessarily balanced.

Let $T \subset \mathcal{T}_d$ be any subregion. Given a monomial $x^a y^b z^c$ with degree less than $d$, the
\emph{monomial subregion} of $T$ associated to $x^a y^b z^c$ is the part of $T$ contained in the triangle $a$ units from
the bottom edge, $b$ units from the upper-right edge, and $c$ units from the upper-left edge. In other words, this
monomial subregion consists of the triangles that are in $T$ and the puncture associated to the monomial $x^a y^b z^c$.

We previously established a characterization of tileable triangular regions associated to monomial ideals.

\begin{theorem}{\cite[Theorem 2.2]{CN-resolutions}}\label{thm:tileable}
    Let $T = T_d(I)$ be a balanced triangular region, where $I \subset R$ is any monomial ideal.  Then $T$ is tileable if and only if
    $T$ has no $\dntri$-heavy monomial subregions.
\end{theorem}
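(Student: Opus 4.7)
One direction of the equivalence is a straightforward local argument. Suppose $T$ is tileable, fix a tiling, and let $S = T \cap \Delta$ be a monomial subregion, where $\Delta \subseteq \mathcal{T}_d$ is an upward-pointing triangular shape associated to some monomial of degree less than $d$. I would classify each lozenge of the tiling of $T$ according to whether it lies entirely inside $\Delta$, entirely outside $\Delta$, or crosses the boundary of $\Delta$. The key local fact is that every unit triangle adjacent to the boundary of $\Delta$ from its interior is upward-pointing; this follows directly from the geometry of an upward-pointing equilateral triangle, whose rows each end in a $\uptri$ on both sides and whose bottom row is made of $\uptri$'s. Consequently, each boundary-crossing lozenge contributes exactly one $\uptri$ (and no $\dntri$) to $S$. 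Summing the contributions yields that the number of $\uptri$'s in $S$ equals the number of $\dntri$'s in $S$ plus the number of boundary-crossing lozenges, so $S$ is not $\dntri$-heavy.

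For the converse, I would encode lozenge tilings of $T$ as perfect matchings in the bipartite graph $G$ whose two vertex classes are the $\uptri$'s of $T$ and the $\dntri$'s of $T$, with edges between triangles sharing an edge. Since $T$ is balanced, the two classes have the same size, so by Hall's marriage theorem a perfect matching exists if and only if $|N_G(D)| \geq |D|$ for every set $D$ of $\dntri$'s in $T$. Assuming $T$ is not tileable, I would take a Hall-violating set $D$ of minimum size and aim to produce a $\dntri$-heavy monomial subregion by finding an upward-pointing triangle $\Delta \subseteq \mathcal{T}_d$ such that $T \cap \Delta$ contains $D$ while containing at most $|N_G(D)|$ upward-pointing triangles of $T$.

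The main obstacle is the structural step: showing that a minimum Hall-violating $D$ is triangularly convex, meaning that $D \cup N_G(D)$ is exactly the set of triangles of $T$ lying in some monomial subregion $T \cap \Delta$. My plan is to prove this via a local closure argument. The minimality of $D$ forces every $\dntri$ of $T$ not in $D$ to have at least one neighbor outside $N_G(D)$, and I would translate this closure, using the three lozenge-edge directions of $\mathcal{T}_d$, into the statement that the complement of $D \cup N_G(D)$ inside the smallest enclosing upward triangle $\Delta$ decomposes as a disjoint union of punctures of $I$ together with the exterior of $\Delta$. At that point $T \cap \Delta$ is the monomial subregion associated to $\Delta$, and the inequality $|D| > |N_G(D)|$ directly exhibits it as $\dntri$-heavy, contradicting the hypothesis. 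A natural alternative would be induction on the number of triangles of $T$, peeling off a boundary lozenge whenever possible; there the obstacle is precisely the same, namely ensuring that any obstruction to peeling is located in a monomial subregion.
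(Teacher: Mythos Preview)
This theorem is not proved in the present paper; it is quoted from \cite{CN-resolutions}, so there is no in-paper proof to compare your proposal against. I therefore assess the proposal on its own terms.

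Your forward direction is correct and complete. The geometric fact that every unit triangle bordering $\Delta$ from the inside is upward-pointing is exactly what is needed, and the bookkeeping (interior lozenges contribute one $\uptri$ and one $\dntri$, boundary-crossing lozenges contribute one $\uptri$ only) gives $\#\uptri(S) - \#\dntri(S) \ge 0$ for every monomial subregion $S$.

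For the converse, the Hall's-theorem framework and the reduction to a minimum violating set $D$ are the natural moves, and are in line with how such results are typically established. But the step you yourself flag as ``the main obstacle'' is a genuine gap, not a routine detail. Minimality of $D$ tells you that $|N_G(D)| = |D| - 1$ and that every $\dntri$ of $T$ outside $D$ has a $\uptri$-neighbor outside $N_G(D)$; it does \emph{not} immediately tell you that $D \cup N_G(D)$ fills out $T \cap \Delta$ for some upward triangle $\Delta$. Concretely, you must rule out two failure modes: (a) a $\dntri$ of $T$ inside the smallest enclosing $\Delta$ that is not in $D$ but whose ``escape'' $\uptri$-neighbor also lies inside $\Delta$, and (b) a $\uptri$ of $T$ inside $\Delta$ that is not in $N_G(D)$. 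Your sentence ``translate this closure \ldots\ into the statement that the complement \ldots\ decomposes as a disjoint union of punctures of $I$'' is precisely the assertion that (a) and (b) do not occur, and it needs proof. One workable route is to study instead the set of unit triangles of $\mathcal{T}_d$ \emph{not} in $D \cup N_G(D)$, show via the three edge-directions that this set is upward-closed in the appropriate partial order (so it is a union of upward-pointing triangular blocks), and then extract from one such block the desired $\dntri$-heavy monomial subregion. Until that closure argument is actually carried out, the converse remains a plausible outline rather than a proof.
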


A subregion $T \subset \mathcal{T}_d$ can be associated to a bipartite planar graph $G$ that is an induced subgraph
of the honeycomb graph. Lozenge tilings of $T$ can be then related to perfect matchings on $G$. The connection was
used by Kuperberg in~\cite{Kup}, the earliest citation known to the authors, to study symmetries on plane partitions.

Using this connection, the \emph{bi-adjacency matrix of $T$} is the bi-adjacency matrix $Z(T) := Z(G)$ of the graph $G$ associated to $T$.
If $T = T_d(I)$ for some Artinian ideal $I$, then $Z(T)$ is the transpose of the matrix defined by $\times(x+y+z): [R/I]_{d-2} \to [R/I]_{d-1}$ using monomial bases in the reverse-lexicographic order (see  \cite[Proposition 4.5]{CN-small-type}).
Based on results by Migliore, Mir\'o-Roig, and the second author~\cite{MMN}, we established the following criterion for
the presence of the weak Lefschetz property. 

\begin{theorem}{\cite[Corollary 4.7]{CN-small-type}}\label{thm:wlp-biadj}
    Let $I$ be an Artinian monomial ideal in $R = K[x,y,z]$. Then $R/I$ has the weak Lefschetz property if and only if,
    for each positive integer $d$, the matrix $Z(T_d(I))$ has maximal rank.
\end{theorem}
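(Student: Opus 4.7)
The plan is to reduce the assertion to the statement that $R/I$ has the weak Lefschetz property if and only if multiplication by the specific linear form $\ell = x+y+z$ has maximal rank in every degree, and then to identify this multiplication map with (the transpose of) $Z(T_d(I))$ under the monomial bases coming from the triangular regions.

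First, I would invoke the result of Migliore, Mir\'o-Roig, and Nagel~\cite{MMN} asserting that, over a field $K$ of characteristic zero, a monomial algebra $R/I$ with $R = K[x,y,z]$ has the weak Lefschetz property if and only if the map $\times(x+y+z) : [R/I]_{d-2} \to [R/I]_{d-1}$ has maximal rank for every $d \ge 1$. The backward direction here is tautological from the definition of WLP. The forward direction uses a semicontinuity argument combined with the torus symmetry of $I$: the locus of linear forms $\ell$ for which $\times \ell$ has maximal rank in a given degree is Zariski open, so if any Lefschetz element exists, a generic linear form works, and one may then specialize a generic form to $x+y+z$ without losing maximal rank because $I$ is fixed by the coordinate-rescaling torus.

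Second, I would spell out the combinatorial identification. The downward- and upward-pointing unit triangles of $T_d(I)$ are labeled by monomial $K$-bases of $[R/I]_{d-2}$ and $[R/I]_{d-1}$, respectively, by construction of the triangular region. In the honeycomb graph $G$ associated to $T_d(I)$, a downward triangle labeled by a monomial $m$ is adjacent exactly to the (up to three) upward triangles whose labels are $xm$, $ym$, and $zm$; any such neighbor that lies in $I$ is simply absent from $T_d(I)$. Consequently, expanding $(x+y+z)\cdot m$ in the monomial basis of $[R/I]_{d-1}$ records precisely the row of the bi-adjacency matrix of $G$ corresponding to $m$. This is the content of~\cite[Proposition 4.5]{CN-small-type}, which is cited just before the theorem; reading the matrix with rows indexed by downward triangles and columns by upward triangles gives the transpose of the matrix of $\times(x+y+z)$ in the reverse-lexicographic monomial bases.

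Combining the two steps, $Z(T_d(I))$ has maximal rank if and only if $\times(x+y+z):[R/I]_{d-2}\to [R/I]_{d-1}$ has maximal rank, and this holds for all $d \ge 1$ if and only if $R/I$ has the weak Lefschetz property. The main obstacle is the reduction to the distinguished form $x+y+z$: it requires characteristic zero and the semicontinuity-plus-symmetry argument from~\cite{MMN}, without which one could not pass from "some linear form works" to "$x+y+z$ works". Once that reduction is in hand, the identification with the bi-adjacency matrix is essentially a bookkeeping exercise on the labeled triangular region.
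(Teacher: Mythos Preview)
Your proposal is correct and follows essentially the same approach that the paper indicates: the paper does not prove this theorem here but cites it from \cite{CN-small-type}, and the text immediately preceding the statement explains that $Z(T_d(I))$ is the transpose of the matrix of $\times(x+y+z):[R/I]_{d-2}\to[R/I]_{d-1}$ (via \cite[Proposition~4.5]{CN-small-type}) and that the criterion is ``based on results by Migliore, Mir\'o-Roig, and the second author~\cite{MMN}''. Your two-step outline (reduce to $\ell=x+y+z$ by the torus-symmetry/semicontinuity argument of \cite{MMN}, then identify the multiplication matrix with the bi-adjacency matrix of the honeycomb graph) is exactly this.
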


It is well-known that the permanent of $Z(G)$ enumerates the perfect matchings of $G$.  
A perfect matching of $G$ can be signed via the permutation it generates; thus a lozenge tiling $\tau$ of $T$ can be similarly signed (see \cite{CN-resolutions}).
Hence  the \emph{signed} tilings of the region are related to the determinant of the bi-adjacency matrix.

\begin{theorem}{\cite[Theorem 3.5]{CN-resolutions}}\label{thm:pm-det}
     If $T \subset \mathcal{T}_d$ is a non-empty balanced subregion, then the signed lozenge tilings of
     $T$ are enumerated by $\det{Z(T)}$.
\end{theorem}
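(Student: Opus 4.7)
The plan is to deduce the theorem from the Leibniz expansion of the determinant together with the bijection, already introduced before the statement, between lozenge tilings of $T$ and perfect matchings of the associated bipartite planar graph $G$. Since $T$ is balanced, $G$ has the same number $n$ of vertices in each colour class, so $Z(T) = Z(G)$ is an $n \times n$ $0/1$--matrix whose $(i,j)$-entry equals $1$ precisely when the $i$-th $\dntri$-triangle shares an edge with the $j$-th $\uptri$-triangle in $T$. Thus the nonzero entries of $Z(T)$ record exactly the lozenges that may appear in a tiling of $T$.

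First I would recall from \cite{CN-resolutions} that each lozenge tiling $\tau$ of $T$ yields a perfect matching of $G$, and hence a bijection $\sigma_\tau \in \PS_n$ between the $\dntri$-indices and the $\uptri$-indices; by the definition of the signed tilings recalled above, the sign assigned to $\tau$ is $\sgn(\sigma_\tau)$. Conversely, a permutation $\sigma \in \PS_n$ satisfies $\prod_{i=1}^n Z(T)_{i,\sigma(i)} \neq 0$ if and only if every pair $(i,\sigma(i))$ corresponds to an edge of $G$, which is the same as saying $\sigma$ comes from a perfect matching of $G$ and therefore from a unique lozenge tiling of $T$.

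Then I would apply the Leibniz formula,
$$
\det Z(T) = \sum_{\sigma \in \PS_n} \sgn(\sigma) \prod_{i=1}^n Z(T)_{i,\sigma(i)}.
$$
Because $Z(T)$ has entries in $\{0,1\}$, only permutations coming from lozenge tilings contribute, each with product equal to $1$. Hence $\det Z(T) = \sum_\tau \sgn(\sigma_\tau)$, where $\tau$ ranges over all lozenge tilings of $T$, which is precisely the signed enumeration asserted by the theorem.

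The only genuine obstacle is verifying that the sign convention assigned to a tiling in \cite{CN-resolutions} is compatible with the one induced by the row/column labelling of $Z(T)$. Since both the matrix $Z(T)$ and the sign of a tiling are defined from the same bipartite graph $G$ equipped with a chosen ordering of its two vertex classes, this compatibility is automatic once one fixes these orderings uniformly, so no additional combinatorial argument is required beyond the dictionary between tilings and matchings.
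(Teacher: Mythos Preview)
The paper does not prove this theorem at all; it is quoted verbatim as \cite[Theorem~3.5]{CN-resolutions} and used as a black box, so there is no ``paper's own proof'' to compare against. Your argument is the standard (and correct) one: once the sign of a tiling is \emph{defined} to be the sign of the permutation induced by the corresponding perfect matching of $G$ (as the paper states just before the theorem), the Leibniz expansion of $\det Z(T)$ immediately yields the signed count, because the $0/1$ entries kill every permutation that is not a perfect matching and leave a contribution of $\sgn(\sigma_\tau)$ for each tiling $\tau$. Your final paragraph about the compatibility of sign conventions is also on point: since the matrix and the tiling sign are built from the same ordered bipartition, there is nothing further to check.
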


We recursively define a puncture of $T \subset \mathcal{T}_d$ to be a \emph{non-floating} puncture if it touches the boundary of 
$\mathcal{T}_d$ or if it overlaps or touches a non-floating puncture of $T$. Otherwise we call a puncture a \emph{floating} puncture.
We further have the regions with only even floating punctures have an easier enumeration.

\begin{corollary}{\cite[Corollary 4.7]{CN-resolutions}}\label{cor:same-sign}
    Let $T$ be a tileable triangular region, and suppose all floating punctures of $T$ have an even side length. Then
    every lozenge tiling of $T$ has the same perfect matching sign as well as the same lattice path sign, and so
    $\per{Z(T)} = |\det{Z(T)}| > 0$.
\end{corollary}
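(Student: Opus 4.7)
The plan is to prove the stronger statement that any two lozenge tilings of $T$ induce the same sign, both as perfect matchings and as lattice paths. Once that is established, Theorem~\ref{thm:pm-det} immediately gives $\det Z(T) = \pm \per Z(T)$, and since $T$ is tileable by hypothesis, $\per Z(T)$ counts a nonempty set of tilings and is strictly positive, yielding $\per Z(T) = |\det Z(T)| > 0$.

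To compare two tilings, fix a reference tiling $\tau_0$ and let $\tau$ be any other tiling. Viewing both as perfect matchings of the bipartite honeycomb graph $G$ associated to $T$, the symmetric difference $\tau \triangle \tau_0$ decomposes into a disjoint union of alternating even cycles $C_1, \ldots, C_m$. A cycle $C_i$ of length $2k_i$ contributes a $k_i$-cycle to the permutation comparing the two matchings, so the relative sign is $\prod_i (-1)^{k_i-1}$. Each $C_i$ bounds a subregion of $\mathcal{T}_d$; an Euler-characteristic style parity count on the honeycomb graph (using that interior lattice vertices of $\mathcal{T}_d$ correspond to hexagonal faces of $G$, while each enclosed puncture contributes a hole whose boundary length is controlled by its side length) shows that $k_i$ is odd whenever the region enclosed by $C_i$ contains no puncture, and more generally whenever it contains only floating punctures of even side length. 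A non-floating puncture cannot be enclosed by $C_i$ at all: by definition it is connected via a chain of touching or overlapping punctures to the boundary of $\mathcal{T}_d$, so any cycle inside $T$ surrounding it would have to exit $T$.

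Under the hypothesis, every floating puncture has even side length, so each factor $(-1)^{k_i - 1}$ equals $+1$, and hence the matching signs of $\tau$ and $\tau_0$ agree. The lattice path sign case runs in parallel: one uses the standard bijection between lozenge tilings and families of non-intersecting lattice paths, and the same enclosed-puncture analysis governs how the induced permutation on path endpoints changes between $\tau$ and $\tau_0$. The main technical obstacle is the parity computation relating the alternating cycle length $2k_i$ to the enclosed interior lattice vertices and puncture side lengths; this is the one place where the even-side-length hypothesis enters crucially, and it is where some bookkeeping on the honeycomb graph is required to verify the claim in full generality.
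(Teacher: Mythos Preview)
The paper does not actually prove this corollary; it is quoted verbatim from \cite[Corollary~4.7]{CN-resolutions} as background in Section~\ref{sub:tiling}, with no argument given here. So there is no ``paper's own proof'' to compare against, and your proposal stands or falls on its own.

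As an outline, your approach is the natural one: compare two tilings via the symmetric difference of their perfect matchings, decompose into alternating cycles, and reduce the sign question to the parity of each half-length $k_i$. Your observation that a non-floating puncture cannot be enclosed by a cycle lying in $T$ is correct and is exactly why the hypothesis is phrased in terms of floating punctures only. However, you do not actually carry out the central step. You assert that ``an Euler-characteristic style parity count'' shows $k_i$ is odd whenever the enclosed region contains only even-side-length floating punctures, but you give no formula relating $k_i$ to the number of enclosed hexagonal faces and the side lengths of the enclosed punctures. This is precisely the content of the corollary, and you yourself flag it as ``the main technical obstacle'' without resolving it. Concretely, one needs something like: for a cycle in the honeycomb graph bounding a region with $h$ interior hexagon vertices and enclosing punctures of side lengths $s_1,\ldots,s_r$, the half-length satisfies $k \equiv 1 + \sum_j s_j \pmod 2$. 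Establishing (and correctly stating) such an identity is the whole proof; everything else is standard.

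The lattice-path half is even thinner: saying the argument ``runs in parallel'' and that ``the same enclosed-puncture analysis governs'' the permutation on endpoints is not a proof. The lattice-path sign is defined via a Lindstr\"om--Gessel--Viennot determinant with a fixed pairing of sources and sinks, and relating a local cycle move on tilings to a transposition (or not) of paths requires its own argument, not just an appeal to analogy. As written, the proposal is a reasonable plan with the key computation left undone.
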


\subsection{Tileability \& semistability of the syzygy bundle}\label{sub:syz}~

Let $I$ be a monomial ideal of $R$ whose punctures in $\mathcal{T}_d$ (corresponding to the minimal generators of $I$ having degree less than $d$)
have side lengths that sum to $m$. Then we define the \emph{over-puncturing coefficient} of $I$ in degree $d$ to be $\mo_d (I) = m - d$.
If $\mo_d (I) < 0$, $\mo_d (I) = 0$, or $\mo_d (I) > 0$, then we call $I$ \emph{under-punctured}, \emph{perfectly-punctured}, or
\emph{over-punctured} in degree $d$, respectively.

Let now $T = T_d(I)$ be a triangular region with punctures whose side lengths sum to $m$.  Then we define similarly the \emph{over-puncturing coefficient}
of $T$ to be $\mo_T = m - d$.  If $\mo_T < 0$, $\mo_T = 0$, or $\mo_T > 0$, then we call $T$ \emph{under-punctured}, \emph{perfectly-punctured},
or \emph{over-punctured}, respectively. Note that $\mo_T = \mo_d (J(T)) \leq \mo_d (I)$, and equality is true if and only if the ideals $I$ and
$J(T)$ are the same in all degrees less than $d$.

Perfectly-punctured regions admit a numerical tileability criterion.

\begin{proposition}{\cite[Corollary 2.4]{CN-amaci}} \label{pro:pp-tileable}
    Let $T = T_d(I)$ be a triangular region.  Then any two of the following conditions imply the third:
    \begin{enumerate}
        \item $T$ is perfectly-punctured;
        \item $T$ has no over-punctured monomial subregions; and
        \item $T$ is tileable.
    \end{enumerate}
\end{proposition}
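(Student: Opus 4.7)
My plan is to deduce each implication from Theorem~\ref{thm:tileable} by translating the over-puncturing coefficient $\mo_{T'}$ of any monomial subregion $T'$ into its triangle imbalance $\Delta(T') := \#\dntri(T') - \#\uptri(T')$. The basic accounting is that $\mathcal{T}_e$ contributes $-e$ to $\Delta$ while each puncture of side $s$ contributes $+s$ to $\Delta$, so that inclusion-exclusion over the minimal punctures of $T' = T_e(J)$ gives
\[
\Delta(T') \;=\; \mo_{T'} \;-\; \sum_{|S|\ge 2}(-1)^{|S|}\, s_{\lcm(S)},
\]
where $S$ ranges over subsets of the minimal generators of $J$ of degree less than $e$ and $s_m := \max(0,\,e - \deg m)$. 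In particular $\Delta(T')=\mo_{T'}$ whenever the minimal punctures of $T'$ are pairwise non-overlapping.

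Next I would prove the \emph{$\gcd$ lemma}: if two minimal generators $m_i,m_j$ of the defining ideal of $T'$ have overlapping punctures, then the monomial sub-subregion of $T'$ at $\gcd(m_i,m_j)$ is over-punctured. Indeed, using $\deg m_i + \deg m_j = \deg\lcm(m_i,m_j) + \deg\gcd(m_i,m_j)$, these two generators alone contribute a strictly positive over-puncturing of $s_{\lcm(m_i,m_j)}$ to that sub-subregion, and extra generators only increase it. Consequently condition (ii) forces pairwise disjoint minimal punctures in every monomial subregion of $T$, and the displayed identity collapses to $\Delta(T')=\mo_{T'}$ throughout.

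With this correspondence in hand, two of the three implications are immediate. For (i)+(ii)$\Rightarrow$(iii), (i) gives $\Delta(T) = \mo_T = 0$, so $T$ is balanced, while (ii) gives $\Delta(T') = \mo_{T'} \le 0$ for every monomial subregion, so none is $\dntri$-heavy; Theorem~\ref{thm:tileable} then yields tileability. For (ii)+(iii)$\Rightarrow$(i), tileability forces $\Delta(T)=0$, and then (ii) gives $\mo_T = \Delta(T) = 0$.

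The remaining direction (i)+(iii)$\Rightarrow$(ii) is the main obstacle, since without assuming (ii) one cannot a priori discard overlaps. I would argue by induction on side length $e$, considering a putative over-punctured subregion $T'_0$ of smallest possible side. The $\gcd$ lemma rules out overlapping generators of $T'_0$ with nontrivial $\gcd$, as these would produce a strictly smaller over-punctured sub-subregion and violate minimality; this reduces matters to the case of pairwise coprime overlapping generators inside $T'_0$. Here (i), which constrains the total puncture weight on $T$ itself, should combine with tileability of $T$ to force $T'_0$ to be $\dntri$-heavy, contradicting Theorem~\ref{thm:tileable}. Making this coprime-overlap case precise is the technical crux of the argument.
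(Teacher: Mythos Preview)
The paper does not prove this proposition at all: it is quoted verbatim as \cite[Corollary 2.4]{CN-amaci} and used as a black box, so there is no in-paper argument to compare your plan against.

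On the merits of your plan itself: the inclusion--exclusion identity relating $\Delta(T')$ and $\mo_{T'}$, together with your $\gcd$ lemma, cleanly handles (i)+(ii)$\Rightarrow$(iii) and (ii)+(iii)$\Rightarrow$(i), since hypothesis (ii) lets you eliminate all overlap terms and reduce to Theorem~\ref{thm:tileable}. That part is fine.

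The direction (i)+(iii)$\Rightarrow$(ii), however, is not just a ``technical crux'' to be filled in later---as written, your outline does not close. Your minimality argument reduces to the case where the minimal over-punctured subregion $T'_0$ has pairwise coprime overlapping generators, but then the $\gcd$ subregion is $T'_0$ itself and you gain nothing. You then assert that (i) and tileability ``should'' force $T'_0$ to be $\dntri$-heavy, but this does not follow from anything you have set up: the inclusion--exclusion correction $-\sum_{|S|\ge 2}(-1)^{|S|}s_{\lcm(S)}$ can be strictly negative (already with two coprime overlapping generators one gets $\Delta(T'_0)=\mo_{T'_0}-s_{m_im_j}$, which can easily be $\le 0$ while $\mo_{T'_0}>0$), so $\mo_{T'_0}>0$ does not by itself yield $\Delta(T'_0)>0$. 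Moreover, hypothesis (i) constrains only the global region $T$, and you give no mechanism to transfer that global constraint down to the local subregion $T'_0$. So this implication is a genuine gap, not merely an unwritten detail; to make the argument go through you would need an additional structural lemma linking perfect puncturing of $T$ to the overlap pattern inside an arbitrary monomial subregion, and nothing in the present paper supplies one.
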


Let $I$ be an Artinian ideal of $S = K[x_1, \ldots, x_n]$ that is minimally generated by forms $f_1, \ldots, f_m$.
The \emph{syzygy module} of $I$ is the graded module $\syz{I}$ that fits into the exact sequence
\[
    0 \rightarrow \syz{I} \rightarrow \bigoplus_{i=1}^{m}S(-\deg f_i) \rightarrow I \rightarrow 0.
\]
Its sheafification $\widetilde\syz{I}$ is a vector bundle on $\PP^{n-1}$, called the \emph{syzygy bundle} of $I$. It has rank $m-1$.

Let $E$ be a vector bundle on projective space.  The \emph{slope} of $E$ is defined as $\mu(E) := \frac{c_1(E)}{rk(E)}$.  Furthermore, 
$E$ is said to be \emph{semistable} if the inequality $\mu(F) \leq \mu(E)$ holds for every coherent subsheaf $F \subset E$.  

Using the characterization of semistability for monomial ideals given by Brenner~\cite{Br}, we previously established a  connection to tileability.

\begin{theorem}{\cite[Theorem 3.3]{CN-amaci}} \label{thm:tileable-semistable}
    Let $I$ be an Artinian ideal in $R = K[x,y,z]$ generated by monomials whose degrees are bounded above by $d$,
    and let $T = T_d(I)$.  If $T$ is non-empty, then any two of the following conditions imply the third:
    \begin{enumerate}
        \item $I$ is perfectly-punctured;
        \item $T$ is tileable; and
        \item $\widetilde\syz{I}$ is semistable.
    \end{enumerate}
\end{theorem}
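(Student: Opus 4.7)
The plan is to reduce to Proposition~\ref{pro:pp-tileable} by establishing the equivalence, in the presence of~(i), between condition~(iii) (semistability of $\widetilde\syz I$) and the absence of over-punctured monomial subregions in $T$ (condition~(b) of that proposition). Once this equivalence is in hand, each pairwise implication in the theorem will follow by applying Proposition~\ref{pro:pp-tileable} after checking that under either (i)$+$(ii) or (i)$+$(iii) one has $\mo_T = \mo_d(I) = 0$, so that (a) of the proposition holds.

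The first step is to invoke Brenner's numerical criterion~\cite{Br} for semistability of the syzygy bundle of a monomial ideal in $K[x,y,z]$. This criterion recasts semistability as a finite list of slope inequalities, one for each subset $J$ of the minimal generators $f_1,\dots,f_m$; namely, the slope $\mu(\widetilde\syz I) = -\frac{1}{m-1}\sum_i \deg f_i$ must dominate the analogous slope associated to each test subconfiguration built from $J$ and its greatest common divisor.

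The key step is the combinatorial translation of Brenner's inequality. Write $\gcd(f_j : j\in J) = x^a y^b z^c$. The monomial subregion of $T$ associated to this gcd has side length $d-(a+b+c)$ and contains the punctures of all $f_j$ with $j\in J$, whose side lengths sum to $\sum_{j\in J}(d-\deg f_j)$. Condition~(i) normalizes the ambient slope to $-d$, after which Brenner's inequality for $J$ rearranges to
\[
\sum_{j\in J}(d-\deg f_j) \;\leq\; d-(a+b+c),
\]
which is precisely the statement that the monomial subregion associated to $\gcd(f_j : j\in J)$ is not over-punctured. Since every monomial $x^ay^bz^c$ of degree less than $d$ arises (or is dominated by) the gcd of the subset of generators divisible by it, ranging over $J$ sweeps out all monomial subregions of $T$. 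This yields the desired equivalence (iii)$\Leftrightarrow$(b) under~(i).

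Finally, applying Proposition~\ref{pro:pp-tileable} to $T$, the pairwise implications among (a), (b), (c) translate to the pairwise implications among (i), (iii), (ii) once one verifies $\mo_T = \mo_d(I)$ under the hypotheses; I would handle this by observing that $\mo_T \le \mo_d(I)$ always, and that a generator whose puncture is invisible in $T$ would violate either tileability or the global slope inequality in the presence of~(i). I expect the main technical obstacle to be the translation in the third paragraph: matching Brenner's test subideals (indexed by subsets of generators together with their gcd) to monomial subregions, and confirming that the slope inequality simplifies \emph{exactly} to the over-puncturing inequality when the ambient slope is $-d$, rather than only up to an inequality in one direction.
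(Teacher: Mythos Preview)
This theorem is not proved in the present paper; it is quoted from \cite{CN-amaci}. The only information the paper gives about the argument is the sentence immediately preceding the statement: the result was obtained ``using the characterization of semistability for monomial ideals given by Brenner~\cite{Br}.'' Your plan is built on precisely that characterization and on Proposition~\ref{pro:pp-tileable}, so at the level of overall strategy you are aligned with the intended proof, and your translation of Brenner's inequality for a subset $J$ into the inequality $\sum_{j\in J}(d-d_j)\le d-\deg\gcd(f_j:j\in J)$ under the normalization $\mu(\widetilde\syz I)=-d$ is correct.

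There is, however, a structural gap. Your equivalence (iii)$\Leftrightarrow$(b) of Proposition~\ref{pro:pp-tileable} is established \emph{only under hypothesis}~(i), and you then invoke Proposition~\ref{pro:pp-tileable} to pass between (ii) and (iii) in the presence of~(i). This covers (i)$+$(ii)$\Rightarrow$(iii) and (i)$+$(iii)$\Rightarrow$(ii), but the theorem also asserts (ii)$+$(iii)$\Rightarrow$(i), and nothing in your outline addresses that direction. The bridge you build rests on the identity $\mu(\widetilde\syz I)=-d$, which \emph{is} condition~(i); without it Brenner's inequalities no longer collapse to the over-puncturing inequalities, so the same route is unavailable. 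A separate argument is required: one must show that tileability (hence balancedness) of $T_d(I)$ together with the global slope constraint coming from semistability forces $\sum_i(d-d_i)=d$. You should also make precise the step you flag at the end, namely that $\mo_T=\mo_d(I)$: minimality of the generators prevents any single puncture from being absorbed, but overlapping punctures can still alter the minimal generating set of $J(T)$, so this deserves a careful check rather than a remark.
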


We note that Brenner and Kaid showed in \cite[Corollary~3.3]{BK} that, for almost complete intersections, nonsemistability implies the weak
Lefschetz property in characteristic zero.

\section{The criterion}

In this section we will establish sufficient conditions that guarantee the presence of the weak Lefschetz property. We use this result to exhibit explicit infinite families of ideals such that their quotients have the weak Lefschetz property.

We begin with a  necessary condition for the presence of the weak Lefschetz property.

\begin{proposition} \label{pro:non-tileable-non-wlp}
    Let $I$ be a monomial ideal such that $T_d(I)$ is a balanced region that is not tileable. Put $J = I + (x^d, y^d, z^d)$.
    Then $R/J$ never has the weak Lefschetz property, regardless of the characteristic of $K$.
\end{proposition}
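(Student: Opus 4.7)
The plan is to translate the non-tileability of $T_d(I)$ into a characteristic-free obstruction to injectivity of multiplication by every linear form on $R/J$, bypassing Theorem~\ref{thm:wlp-biadj} (which is set up in characteristic zero and keyed to the specific linear form $x+y+z$).

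First, I would observe that each of $x^d, y^d, z^d$ has degree $d$, while the triangles of $\mathcal{T}_d$ carry labels of degrees $d-2$ and $d-1$. Hence these extra generators do not remove any triangle of $\mathcal{T}_d$, and in fact a monomial of degree strictly less than $d$ lies in $J$ if and only if it lies in $I$. Consequently $T_d(J) = T_d(I)$, which is still balanced and non-tileable, and in particular $\dim_K [R/J]_{d-2} = \dim_K [R/J]_{d-1}$. Also $J$ is Artinian, since it contains pure powers of all three variables, so the weak Lefschetz property is a meaningful question. Applying Theorem~\ref{thm:tileable} to the balanced non-tileable region $T_d(J)$ produces a monomial $m = x^a y^b z^c$ whose monomial subregion is $\dntri$-heavy; set $e := d - (a+b+c)$.

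Next, I define $V \subseteq [R/J]_{d-2}$ to be the $K$-span of $\{mn + J : n \text{ a monomial of degree } e-2\}$, and $W \subseteq [R/J]_{d-1}$ to be the analogous span with $\deg n = e-1$. Because $J$ coincides with $I$ in all degrees below $d$, the nonzero classes $mn + J$ are $K$-linearly independent in $R/J$ and match exactly the labels of the $\dntri$- and $\uptri$-triangles of $T_d(I)$ lying inside the monomial subregion of $m$. The $\dntri$-heavy hypothesis therefore becomes $\dim_K V > \dim_K W$.

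Finally, for any linear form $\ell = \alpha x + \beta y + \gamma z$, one has $\ell \cdot mn = m(\alpha xn + \beta yn + \gamma zn)$, a sum of $K$-multiples of monomials of the form $mn'$ with $\deg n' = e-1$, so $\ell V \subseteq W$. Since $\dim_K V > \dim_K W$, the restriction $V \to W$ has a nontrivial kernel, which is then a nontrivial kernel for $\times \ell : [R/J]_{d-2} \to [R/J]_{d-1}$. Because the source and target have the same dimension, $\times \ell$ cannot have maximal rank, and this holds for every linear form $\ell$ and every characteristic of $K$, so $R/J$ never has the weak Lefschetz property. The one step that needs careful attention is the identification of $\dim_K V$ and $\dim_K W$ with the $\dntri$- and $\uptri$-counts in the subregion; this is essentially just the statement that the monomials outside $J$ form a basis of $R/J$, combined with the observation that $J$ and $I$ agree in degrees $\leq d-1$.
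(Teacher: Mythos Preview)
Your argument is correct and takes a genuinely different route from the paper's. The paper argues that non-tileability forces $\det Z(T_d(J)) = 0$ via Theorem~\ref{thm:pm-det}, and then invokes Theorem~\ref{thm:wlp-biadj} to conclude that $R/J$ fails the weak Lefschetz property. You instead use Theorem~\ref{thm:tileable} to extract a $\dntri$-heavy monomial subregion and build from it subspaces $V \subset [R/J]_{d-2}$ and $W \subset [R/J]_{d-1}$ with $\ell V \subseteq W$ and $\dim_K V > \dim_K W$ for \emph{every} linear form $\ell$, so that no $\ell$ can be injective in that degree. The paper's proof is shorter but leans on the reduction from an arbitrary Lefschetz element to $x+y+z$ that underlies Theorem~\ref{thm:wlp-biadj}; that reduction uses the torus action and needs enough units in $K$, so it is really an infinite-field statement, while the proposition is asserted ``regardless of the characteristic of $K$.'' Your approach avoids this entirely: it is elementary, uniform in $\ell$, and valid over any field (finite or infinite), which is exactly what the proposition demands. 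The one cosmetic point is that your kernel argument already shows $\times \ell$ is not injective, hence not of maximal rank since the region is balanced; you state this, and it is the right way to finish.
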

\begin{proof}
    Since $T_d(I) = T_d(J)$ is not tileable, Theorem~\ref{thm:pm-det} gives $\det Z(T_d(J)) = 0$. Thus, $Z(T_d(J))$
    does not have maximal rank. Now we conclude by Theorem~\ref{thm:wlp-biadj}.
\end{proof}

We illustrate the preceding proposition with an example.

\begin{example}
    Consider the regions depicted in Figure~\ref{fig:nontileable}.
    \begin{figure}[!ht]
        \begin{minipage}[b]{0.48\linewidth}
            \centering
            \includegraphics[scale=1]{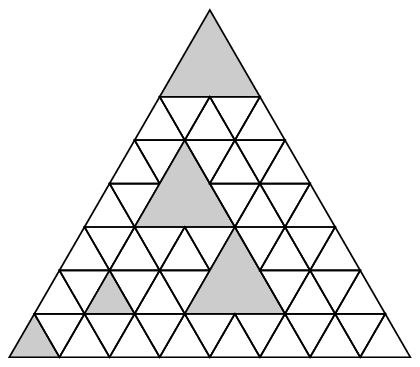}\\
            \emph{(i) $T = T_8(x^6, y^7, z^8, xy^5z, xy^2z^3, x^3y^2z)$}
        \end{minipage}
        \begin{minipage}[b]{0.48\linewidth}
            \centering
            \includegraphics[scale=1]{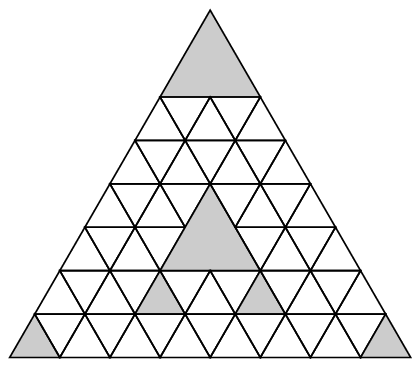}\\
            \emph{(ii) $T' = T_8(x^6, y^7, z^7, xy^4z^2, xy^2z^4, x^2y^2z^2)$}
        \end{minipage}
        \caption{Two balanced non-tileable triangular regions.}
        \label{fig:nontileable}
    \end{figure}
    These regions are both balanced, but non-tileable as they contain $\dntri$-heavy monomial subregions (see
    Theorem~\ref{thm:tileable}). In particular, the monomial subregion associated to $xy^2z$ in $T$ and the monomial
    subregion associated to $xy^2z^2$ in $T'$ are both $\dntri$-heavy. Thus, $R/ (x^6, y^7, z^8, xy^5z, xy^2z^3, x^3y^2z)$
    and $R/(x^6, y^7, z^7, xy^4z^2, xy^2z^4, x^2y^2z^2)$ both fail to have the weak Lefschetz property,
    regardless of the characteristic of the base field.
\end{example}

Now we use Proposition~\ref{pro:non-tileable-non-wlp} in order to relate the weak Lefschetz property and semistability of
syzygy bundles (see Section~\ref{sub:syz}). In preparation, we record the following observation. Recall that the
monomial ideal of a triangular region $T \subset {\mathcal T}_d$ is the largest ideal $J$ whose minimal generators have
degrees less than $d$ such that $T = T_d (J)$.

\begin{lemma}\label{lem:lcm-overlap}
    Let $J \subset R$ be the monomial ideal of a triangular region $T \subset {\mathcal T}_d$. Then:
    \begin{enumerate}
        \item The region $T$ has no overlapping punctures if and only if each degree of a least common multiple of two
            distinct minimal generators of $J$ is at least $d$.
        \item The punctures of $T$ are not overlapping nor touching if and only if each degree of a least common
            multiple of two distinct minimal generators of $J$ is at least $d+1$.
    \end{enumerate}
\end{lemma}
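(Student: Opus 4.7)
The plan is to translate the geometric conditions on punctures directly into inequalities on the degrees of pairwise lcm's. First I would fix two distinct minimal generators $m_1 = x^{a_1} y^{b_1} z^{c_1}$ and $m_2 = x^{a_2} y^{b_2} z^{c_2}$ of $J$, with associated punctures $P_1$ and $P_2$. Using the description recalled in Subsection~\ref{sub:tri}, each $P_i$ is an upward-pointing closed equilateral triangle of side length $s_i = d - \deg m_i \geq 1$, cut out of $\mathcal{T}_d$ in barycentric coordinates $(\alpha,\beta,\gamma)$ (with $\alpha + \beta + \gamma = d$) by the inequalities $\alpha \geq a_i$, $\beta \geq b_i$, $\gamma \geq c_i$.

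Next I would observe that the intersection $P_1 \cap P_2$ is itself a grid-aligned upward-pointing triangle (or degenerate/empty), namely the one anchored at $(\max\{a_1,a_2\},\max\{b_1,b_2\},\max\{c_1,c_2\})$, whose (possibly non-positive) side length is
\[
d - \max\{a_1,a_2\} - \max\{b_1,b_2\} - \max\{c_1,c_2\} = d - \deg \lcm(m_1,m_2).
\]
Three mutually exclusive cases then arise. If $\deg \lcm(m_1,m_2) < d$, the intersection is an honest triangle of side length at least $1$; it contains unit edges lying in both $P_1$ and $P_2$, so the punctures overlap. If $\deg \lcm(m_1,m_2) = d$, the intersection collapses to the single lattice point $(\max\{a_1,a_2\},\max\{b_1,b_2\},\max\{c_1,c_2\})$, which is a common corner of $P_1$ and $P_2$; thus they are touching but not overlapping. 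If $\deg \lcm(m_1,m_2) > d$, the intersection is empty.

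Reading these cases off pair by pair yields both statements: part (i) is exactly the condition that the first case is excluded for every pair of distinct minimal generators, i.e., $\deg \lcm(m_1,m_2) \geq d$ always; part (ii) additionally excludes the second case, i.e., forces $\deg \lcm(m_1,m_2) \geq d+1$ always. The main subtlety is confirming that the combinatorial notion of overlap (sharing a unit edge of $\mathcal{T}_d$) really coincides with the set-theoretic statement that $P_1 \cap P_2$ has positive side length, and that touching (sharing precisely a vertex) corresponds to side length $0$. This is where the up-pointing and grid-aligned nature of every puncture is used: two distinct such triangles cannot share an edge without their intersection containing a full triangular subregion, which secures the equivalence. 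All remaining work is routine bookkeeping with barycentric coordinates.
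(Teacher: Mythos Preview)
Your argument is correct and is essentially the same as the paper's, just phrased in barycentric coordinates rather than in terms of monomials. The paper observes that two punctures overlap precisely when some degree-$(d-1)$ monomial is a common multiple of the two generators (equivalently, $\deg\lcm(m_1,m_2)\le d-1$), and that they share at least a vertex precisely when some degree-$d$ monomial is; your computation of the side length $d-\deg\lcm(m_1,m_2)$ of $P_1\cap P_2$ is the same observation in different language.
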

\begin{proof}
    Let $m_1$ and $m_2$ be two distinct minimal generators of $J$. Then their associated punctures overlap if and only
    if there is a monomial of degree $d-1$ that is a multiple of $m_1$ and $m_2$. The existence of such a monomial means
    precisely that the degree of the least common multiple of $m_1$ and $m_2$ is at most $d-1$. Now claim (i) follows.

    Assertion (ii) is shown similarly by observing that the punctures to $m_1$ and $m_2$ touch if and only if there is a 
    monomial of degree $d$ that is a multiple of $m_1$ and $m_2$.
\end{proof}

The following consequence is useful later on.

\begin{corollary}\label{cor:socle-degree-bound}
    Assume $T \subset {\mathcal T}_d$ is a triangular region whose punctures are not overlapping nor touching, and let $J$
    be the monomial ideal of $T$. Then $R/J$ does not have non-zero socle elements of degree less than $d-1$.
\end{corollary}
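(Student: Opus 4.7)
My plan is to argue by contradiction, using Lemma~\ref{lem:lcm-overlap}(ii) as the driving tool. Suppose that $[R/J]_e$ carries a non-zero socle element for some $e \le d-2$. Because $J$ is a monomial ideal, the socle has a $K$-basis of monomial classes, so I may pick a monomial representative $m \in R$ with $m \notin J$, $\deg m = e$, and $xm, ym, zm \in J$. I then fix minimal generators $g_x, g_y$ of $J$ with $g_x \mid xm$ and $g_y \mid ym$. Since $m \notin J$, neither $g_x$ nor $g_y$ can divide $m$ itself, which forces $x \mid g_x$ and $y \mid g_y$.

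The decisive observation is that $g_x \ne g_y$: if the two coincided, this single generator would divide both $xm$ and $ym$, hence also their gcd, which equals $m$ because $\gcd(x,y) = 1$; this would contradict $m \notin J$. Now both $g_x$ and $g_y$ divide $xym$, so
\[
    \deg \lcm(g_x, g_y) \;\le\; \deg(xym) \;=\; e + 2 \;\le\; d.
\]
Since the punctures of $T$ are neither overlapping nor touching, Lemma~\ref{lem:lcm-overlap}(ii) forces $\deg \lcm(g_x, g_y) \ge d+1$, a contradiction.

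I expect the main obstacle to be conceptual rather than technical. The key realization is that a socle element in low degree must be annihilated by each of the three variables separately, so one can produce \emph{two distinct} minimal generators of $J$ that simultaneously divide a single monomial of degree $e+2 \le d$, hence whose lcm has degree at most $d$. Once this is seen, the non-overlapping-and-non-touching hypothesis plugs in through Lemma~\ref{lem:lcm-overlap}(ii) with essentially no further work; the only routine caveats are that socle elements of a monomial-ideal quotient admit monomial representatives, and that both $g_x$ and $g_y$ must be genuinely divisible by $x$ respectively $y$ (which is what keeps them from dividing $m$ outright).
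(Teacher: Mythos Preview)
Your argument is correct, and it reaches the same conclusion by a genuinely different route than the paper. The paper's proof is homological: it observes that every minimal first syzygy of the monomial ideal $J$ arises from a relation $m_i n_i - m_j n_j = 0$ between two distinct minimal generators, hence has degree equal to $\deg\lcm(m_i,m_j) \ge d+1$ by Lemma~\ref{lem:lcm-overlap}(ii); consequently the second syzygies of $J$ live in degrees $\ge d+2$, and since socle generators of $R/J$ correspond (with a shift by $3$) to minimal second syzygies, they have degree $\ge d-1$. Your proof avoids the free resolution entirely and works hands-on with a hypothetical low-degree socle monomial, extracting two distinct minimal generators dividing $xym$ and invoking the same lemma directly. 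Both arguments pivot on Lemma~\ref{lem:lcm-overlap}(ii); yours is more elementary and self-contained, requiring no appeal to the syzygy--socle correspondence, while the paper's approach is more structural and would generalize more readily if one wanted to bound other homological invariants. A small bonus of your method is that it never actually uses $zm \in J$: only $xm, ym \in J$ and $m \notin J$ are needed, so you have in fact proved a slightly stronger statement.
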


\begin{proof}
    Since $J$ is a monomial ideal, every minimal first syzygy of $J$ corresponds to a relation $m_i n_i - m_j n_j = 0$ for
    suitable monomials $n_i$ and $n_j$, where $m_i$ and $m_j$ are distinct monomial minimal generators of $J$. Applying
    Lemma~\ref{lem:lcm-overlap} to the equality $m_i n_i = m_j n_j$, we conclude that the degree of each first syzygy of
    $J$ is at least $d+1$. It follows that the degree of every second syzygy of $J$ is at least $d+2$. Each minimal
    second syzygy of $J$ corresponds to a socle generator of $R/J$.
    Hence, the degrees of the socle generators of $R/J$ are at least $d-1$ .
\end{proof}

\begin{remark}
The converse of Corollary~\ref{cor:socle-degree-bound} is not true in general. For example, the socle generators of
$R/(x^6, y^7, z^8, xy^5z, xy^2z^3, x^3y^2z)$ have degrees greater than 7, but two punctures of
$T_8 (x^6, y^7, z^8, xy^5z, xy^2z^3, x^3y^2z)$ touch each other (see Figure~\ref{fig:nontileable}).
\end{remark}

Recall that perfectly-punctured regions were defined above Proposition~\ref{pro:pp-tileable}. This concept is used in
the proof of the following result.

\begin{theorem}\label{thm:wlp-to-semistab}
    Let $I \subset R$ be an Artinian ideal whose minimal monomial generators have degrees  $d_1,\ldots,d_t$. Set
    \[
        d := \frac{d_1 + \cdots + d_t}{t-1}.
    \]
    Assume the following conditions are satisfied:
    \begin{enumerate}
        \item The number $d$ is an integer.
        \item For all $i = 1,\ldots,t$, one has $d > d_i$.
        \item Each degree of a least common multiple of two distinct minimal generators of $I$ is at least $d$.
    \end{enumerate}
    Then the syzygy bundle of $I$ is semistable if  $R/I$ has the weak Lefschetz property.
\end{theorem}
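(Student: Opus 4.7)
My plan is to show that the hypotheses force $T := T_d(I)$ to be a balanced, non-empty, perfectly-punctured region with no overlapping punctures, and then to extract tileability from the weak Lefschetz property so that Theorem~\ref{thm:tileable-semistable} can be invoked.

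First, I would verify that $I$ is perfectly-punctured in degree $d$. The puncture associated to the minimal generator of degree $d_i$ has side length $s_i = d - d_i$, which is positive by condition (ii), and the side lengths sum to
\[
    \sum_{i=1}^{t} (d - d_i) \;=\; t\, d - (d_1 + \cdots + d_t) \;=\; t\, d - (t-1)\, d \;=\; d,
\]
so $\mo_d(I) = 0$. Condition (iii) together with Lemma~\ref{lem:lcm-overlap}(i) shows the punctures of $T$ do not overlap. Because each up-puncture of side length $s$ removes $\binom{s+1}{2}$ upward-pointing and $\binom{s}{2}$ downward-pointing unit triangles, the difference of these two counts over $T$ is
\[
    \left[\binom{d+1}{2} - \binom{d}{2}\right] - \sum_{i=1}^{t} \left[\binom{s_i+1}{2} - \binom{s_i}{2}\right] \;=\; d - \sum_{i=1}^{t} s_i \;=\; 0,
\]
so $T$ is balanced. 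A straightforward convexity argument comparing $\sum_i \binom{s_i+1}{2}$ with $\binom{d+1}{2}$ (using $s_i \geq 1$ and $t \geq 2$) shows that $T$ still contains at least one upward-pointing triangle, hence is non-empty.

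Next, I would use the weak Lefschetz hypothesis to obtain tileability. By Theorem~\ref{thm:wlp-biadj}, the bi-adjacency matrix $Z(T)$ has maximal rank; since $T$ is balanced, $Z(T)$ is square and thus $\det Z(T) \neq 0$. Theorem~\ref{thm:pm-det} then guarantees that $T$ admits a lozenge tiling, so $T$ is tileable.

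At this stage the three conditions of Theorem~\ref{thm:tileable-semistable} apply to $T$: $I$ is perfectly-punctured and $T$ is tileable, so the theorem forces $\widetilde{\syz}(I)$ to be semistable, as desired. The only mildly delicate point in the above plan is the non-emptiness check in Step~1: one must ensure that $T$ has both an up- and a down-triangle so that Theorems~\ref{thm:wlp-biadj}, \ref{thm:pm-det}, and~\ref{thm:tileable-semistable} can be invoked; this is handled by condition~(ii) combined with the non-overlap of punctures and $t \geq 2$.
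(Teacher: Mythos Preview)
Your argument is correct and follows the same route as the paper: show $T=T_d(I)$ is perfectly-punctured and balanced (via condition~(ii) and the identity $\sum_i(d-d_i)=d$), use condition~(iii) with Lemma~\ref{lem:lcm-overlap} to rule out overlaps, extract tileability from the weak Lefschetz property, and finish with Theorem~\ref{thm:tileable-semistable}. The only packaging difference is that the paper cites Proposition~\ref{pro:non-tileable-non-wlp} for the ``WLP $\Rightarrow$ tileable'' step, whereas you unwind that proposition and argue directly via Theorems~\ref{thm:wlp-biadj} and~\ref{thm:pm-det}; you are also more careful than the paper in explicitly checking non-emptiness of $T$, which Theorem~\ref{thm:tileable-semistable} requires.
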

\begin{proof}
    Consider the triangular region $T = T_d (I)$. By assumption (iii) and Lemma~\ref{lem:lcm-overlap}, we obtain that
    the punctures of $T$ do not overlap. Recall that the side length of the puncture to a minimal generator of degree
    $d_i$ is $d - d_i$. The definition of $d$ is equivalent to
    \[
        d = \sum_{i = 1}^t (d - d_i).
    \]
    We conclude that the region $T$ is balanced and perfectly-punctured. Combined with the weak Lefschetz property of
    $R/I$, the first property implies that $T$ is tileable by Proposition~\ref{pro:non-tileable-non-wlp}. Now
    Theorem~\ref{thm:tileable-semistable} gives the semistability of the syzygy bundle of $I$.
\end{proof}

The converse of the above result is not true, in general.

\begin{example} 
Consider the ideal 
\[
        J = (x^{7}, \ x^4 y^2 z^2, \ x y^3 z^3, \ y^{7},   \ z^{7}).
\]
Its syzygy bundle is semistable, and its triangular region $T_9 (J)$ is perfectly-puntured. However, $R/J$ does not have the weak Lefschetz property. Notice though that $R/I$ has the weak Lefschetz property, where $I$ is the very similar ideal 
\[
        I = (x^{7}, \ x^5 y z, \ x y^3 z^3, \ y^{7},   \ z^{8}). 
\]
Both regions, $T_9 (I)$ and $T_9 (J)$ are tileable and symmetric. In fact, they are examples of mirror symmetric regions that are studied  in~\cite{CN-mirror-symmetry}
\end{example}

Under stronger assumptions the converse to Theorem~\ref{thm:wlp-to-semistab} is indeed true.

\begin{theorem}\label{thm:wlp-iff-semistab}
    Let $I \subset R$ be an Artinian ideal with minimal monomial generators $m_1,\ldots,m_t$. Set
    \[
        d := \frac{d_1 + \cdots + d_t}{t-1},
    \]
    where $d_i = \deg m_i$. Assume the following conditions are satisfied:
    \begin{enumerate}
        \item The number $d$ is an integer.
        \item For all $i = 1,\ldots,t$, one has $d > d_i$.
        \item If $i \neq j$, then the degree of the least common multiple of $m_i$ and $m_j$ is at least $d+1$.
        \item If $m_i$ is not a power of $x, y$, or $z$, then $d - d_i$ is even.
    \end{enumerate}
    Then the syzygy bundle of $I$ is semistable if and only if  $R/I$ has the weak Lefschetz property.
\end{theorem}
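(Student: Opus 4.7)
\emph{Proof plan.} The forward implication is immediate from Theorem~\ref{thm:wlp-to-semistab}: our assumption (iii) (lcm degree $\geq d+1$) is strictly stronger than the corresponding assumption there (lcm degree $\geq d$), while all other hypotheses coincide, so Theorem~\ref{thm:wlp-to-semistab} applies directly.

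For the converse, assume $\widetilde\syz(I)$ is semistable and set $T = T_d(I)$. I would first rewrite the defining formula for $d$ as $d = \sum_{i=1}^t (d - d_i)$, which is precisely the condition $\mo_T = 0$, i.e., $T$ is perfectly-punctured. Next, assumption (iii) together with Lemma~\ref{lem:lcm-overlap}(ii) gives that no two punctures of $T$ overlap or touch; consequently, comparing $\binom{d+1}{2} - \sum_i \binom{d - d_i + 1}{2}$ with $\binom{d}{2} - \sum_i \binom{d - d_i}{2}$ and using $\sum_i (d - d_i) = d$ shows $T$ is balanced. Since $T$ is perfectly-punctured and $\widetilde\syz(I)$ is semistable, Theorem~\ref{thm:tileable-semistable} then yields that $T$ is tileable.

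Next I would set up the hypothesis of Corollary~\ref{cor:same-sign}. Since no two punctures overlap or touch, a puncture is non-floating precisely when it touches $\partial \mathcal{T}_d$, which happens exactly when its associated generator $m_i = x^{a_i} y^{b_i} z^{c_i}$ has some exponent equal to zero. Every floating puncture therefore corresponds to an $m_i$ that is not a power of a single variable, and assumption (iv) guarantees each such puncture has even side length $d - d_i$. Corollary~\ref{cor:same-sign} then yields $|\det Z(T)| = \per Z(T) > 0$, so $Z(T_d(I))$ has maximal rank.

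The main obstacle is promoting maximal rank at the single degree $d$ to the full WLP statement, which by Theorem~\ref{thm:wlp-biadj} requires $Z(T_{d'}(I))$ to have maximal rank at \emph{every} degree $d'$. I expect to handle the remaining degrees $d' \neq d$ by exploiting the structural information already assembled: Corollary~\ref{cor:socle-degree-bound} confines the socle of $R/I$ to degrees $\geq d-1$; the perfectly-punctured with no-overlap-or-touch structure forces a strict drop $h_{R/I}(d-1) - h_{R/I}(d) = t-1$ in the Hilbert function; and the tileability machinery from Section~\ref{sec:trireg} can be applied to the (possibly unbalanced) regions $T_{d'}(I)$ for $d' \neq d$ to conclude maximal rank either directly or as a consequence of the bijectivity already established at degree $d$. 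Carrying out this propagation cleanly will be the subtlest step of the argument.
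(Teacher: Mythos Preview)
Your approach is essentially the paper's: the forward direction via Theorem~\ref{thm:wlp-to-semistab}, and for the converse the chain \emph{perfectly-punctured $\Rightarrow$ tileable (Theorem~\ref{thm:tileable-semistable}) $\Rightarrow$ $|\det Z(T)|>0$ (Corollary~\ref{cor:same-sign})}, together with the socle bound from Corollary~\ref{cor:socle-degree-bound}. Your justification that every floating puncture comes from a non-pure-power generator (because under (iii) non-floating collapses to ``touches $\partial\mathcal{T}_d$'', i.e., some exponent is zero) is exactly the content the paper leaves implicit.

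Where you diverge is only in the last step. You treat the passage from maximal rank of $Z(T_d(I))$ to the full weak Lefschetz property as the ``subtlest step'' and propose analyzing the tileability of $T_{d'}(I)$ for $d'\neq d$ and a Hilbert-function drop. This is unnecessary, and the paper does not do it; it simply invokes Corollary~\ref{cor:socle-degree-bound} and concludes. The missing (standard, purely algebraic) argument is this. Since $T$ is balanced, maximal rank of $Z(T)$ means $\times\ell:[R/I]_{d-2}\to[R/I]_{d-1}$ is \emph{bijective}. Surjectivity then propagates upward: if $[A]_{i+1}=\ell[A]_i$ then $[A]_{i+2}=[R]_1[A]_{i+1}=\ell[R]_1[A]_i=\ell[A]_{i+1}$. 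Injectivity propagates downward using the socle bound: if $0\neq a\in[A]_i$ with $\ell a=0$ and $i<d-2$, then for every $f\in[R]_{d-2-i}$ one has $\ell(fa)=0$, so $fa=0$ by injectivity in degree $d-2$; taking the largest $j$ with $a\,[R]_j\neq 0$ yields a nonzero socle element of degree $i+j\le d-3$, contradicting Corollary~\ref{cor:socle-degree-bound}. Thus $\times\ell$ has maximal rank in every degree, and Theorem~\ref{thm:wlp-biadj} gives the weak Lefschetz property. No tileability analysis at other degrees, and no Hilbert-function computation, is needed.
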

\begin{proof}
    By Theorem~\ref{thm:wlp-to-semistab}, it is enough to show that $R/I$ has the weak Lefschetz property if the syzygy
    bundle of $I$ is semistable.

    Consider the region $T = T_d(I)$. In the proof of Theorem~\ref{thm:wlp-to-semistab} we showed that $T$ is balanced
    and perfectly-punctured. Hence $T$ is tileable by Theorem~\ref{thm:tileable-semistable}. Since all floating
    punctures of $T$ have an even side length by assumption (iv), Theorem~\ref{thm:pm-det} and
    Corollary~\ref{cor:same-sign} give that $Z(T)$ has maximal rank.

    Assumption (iii) means that the punctures of $T$ are not overlapping nor touching (see Lemma~\ref{lem:lcm-overlap}).
    Hence, Corollary~\ref{cor:socle-degree-bound} yields that the degrees of the socle generators of $R/I$ are at least
    $d-1$. Therefore, Theorem~\ref{thm:wlp-biadj} proves that $R/I$ has the weak Lefschetz property.
\end{proof}

\begin{remark}
   \label{rem:pictural description}
The assumptions of Theorem~\ref{thm:wlp-iff-semistab} have the following interpretation using the triangular region $T_d (I)$. Assumptions (ii) means that each minimal generator of $I$ gives a puncture of positive side length of $T_d (I)$. Condition (i) expresses the fact that the side lengths of all punctures add up to $d$, that is, $T_d (I)$ is perfectly punctured. Assumption (iv) says that all non-corner punctures have an even side length, and Condition (iii) is the requirement that no punctures of $T_d (I)$ touch or overlap.    
\end{remark}

We now show that, for all positive integers $d_1,\ldots,d_t$ with $t \geq 3$ that satisfy the numerical assumptions (i),
(ii), and (iv) of Theorem~\ref{thm:wlp-iff-semistab}, there is a monomial ideal $I$ whose minimal generators have
degrees $d_1,\ldots,d_t$ to which Theorem~\ref{thm:wlp-iff-semistab} applies and guarantees the weak Lefschetz property
of $R/I$.

\begin{example}\label{exa:ideals-with-wlp}
    Let $d_1,\ldots,d_t$ be $t \geq 3$ positive integers satisfying the following numerical conditions:
    \begin{enumerate}
        \item The number $d := \frac{d_1 + \cdots + d_t}{t-1}$ is an integer.
        \item For all $i = 1,\ldots,t$, one has $d > d_i$.
        \item At most three of the integers $d - d_i$ are not even.
    \end{enumerate}
    Re-indexing if needed, we may assume that $d_3 \leq \min \{d_1, d_2\}$ and that $d - d_i$ is even whenever
    $4 \leq i \leq t$. Consider the following ideal
    \begin{equation*}
        I = (x^{d_1}, y^{d_2}, z^{d_3}, m_4,\ldots,m_t),
    \end{equation*}
    where $m_4 = x^{d-d_3} y z^{-d-1 + d_3 +d_4}$ if $t \geq 4$, $m_5 = x^{2d - d_3 - d_4} y^2 z^{2d - 2 +d_3+d_4 + d_5}$
    if $t \geq 5$, and
    \begin{equation*}
        m_i =
        \begin{cases}
            x^{d-d_3} y^{1 + \sum_{k=4}^{i-1} (d- d_k)} z^{-d (i-3) - 1 + \sum_{k=3}^i d_k } & \text{if } 6 \leq i \leq t \text{ and $i$ is even}\\
            x^{-1 + \sum_{k=3}^{i-1} (d- d_k)} y^2 z^{-d (i-3) -1 + \sum_{k=3}^i d_k } & \text{if } 7 \leq i \leq t \text{ and $i$ is odd.}
        \end{cases}
    \end{equation*}
    Note that $\deg m_i = d_i$ for all $i$. One easily checks that the degree of the least common multiple of any two
    distinct minimal generators of $I$ is at least $d+1$, that is, the punctures of $T_d(I)$ do not overlap nor touch
    each other.
    \begin{figure}[!ht]
        \includegraphics[scale=1]{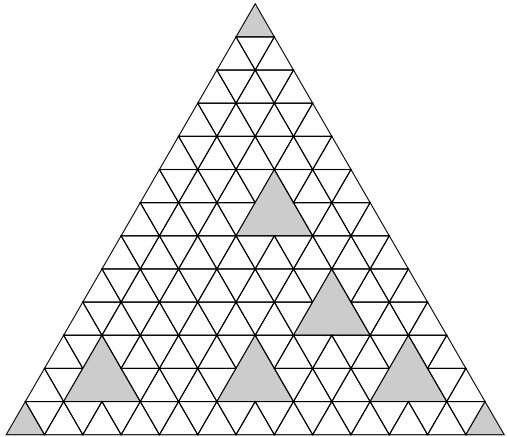}
        \caption{The region corresponding to $d_1 = d_2 = d_3 = 12$ and $d_4 = \cdots = d_8 = 11$ in Example~\ref{exa:ideals-with-wlp}.}
        \label{fig:example-d-13-t-8}
    \end{figure}
\end{example}

\begin{corollary}
    If $I$ is an ideal as defined in Example~\ref{exa:ideals-with-wlp}, then 
    $R/I$ has the weak Lefschetz property and the syzygy bundle of $I$ is semistable.
\end{corollary}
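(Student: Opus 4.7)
The plan is to verify that the ideal $I$ constructed in Example~\ref{exa:ideals-with-wlp} satisfies all four hypotheses of Theorem~\ref{thm:wlp-iff-semistab}, and then to establish tileability of the associated triangular region directly, so that one of the two equivalent conclusions (semistability) follows, with the other (weak Lefschetz property) then coming from the equivalence.

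First I would verify the four conditions of Theorem~\ref{thm:wlp-iff-semistab}. Note that $I$ is Artinian because the pure powers $x^{d_1}, y^{d_2}, z^{d_3}$ belong to $I$. Conditions (i) and (ii) coincide with the numerical hypotheses imposed in the example. Condition (iii), which requires the degree of every pairwise least common multiple to be at least $d+1$, is the routine calculation indicated in the example and can be checked directly from the explicit exponents of $m_4, \ldots, m_t$. For condition (iv), the three pure powers $m_1 = x^{d_1}$, $m_2 = y^{d_2}$, $m_3 = z^{d_3}$ are the only generators not required to have $d - d_i$ even; by the reindexing chosen in the example (which is allowed because at most three of the $d - d_i$ are odd), every $m_i$ with $i \geq 4$ satisfies $d - d_i$ even, and inspection of the explicit formulas shows each such $m_i$ involves at least two distinct variables and thus is not a pure power.

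Next I would establish tileability of $T := T_d(I)$. Condition (i) says exactly that the side lengths of the punctures of $T$ sum to $d$, so $T$ is perfectly-punctured. Condition (iii) together with Lemma~\ref{lem:lcm-overlap} implies that no two punctures of $T$ overlap or touch. A geometric packing argument then rules out over-punctured monomial subregions: within any monomial sub-triangle the punctures (or their clippings by the sub-triangle's boundary) remain pairwise non-touching, and pairwise non-touching upward-pointing triangles packed inside an upward triangle of side $s'$ have total side length at most $s'$. Proposition~\ref{pro:pp-tileable} therefore yields that $T$ is tileable.

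Finally, combining tileability with the perfectly-punctured property, Theorem~\ref{thm:tileable-semistable} provides the semistability of the syzygy bundle of $I$, and the equivalence in Theorem~\ref{thm:wlp-iff-semistab} then upgrades this to the weak Lefschetz property of $R/I$. The main technical obstacle I anticipate is the packing claim underlying the tileability step: proving that pairwise non-touching upward-pointing triangles inside an upward triangle of side $s'$ cannot over-fill it, and carefully handling sub-regions whose boundaries clip punctures of $T$. This should follow from a direct induction on $s'$ or a projection argument onto one edge of the sub-triangle.
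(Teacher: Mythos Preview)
Your overall strategy matches the paper's: verify the hypotheses of Theorem~\ref{thm:wlp-iff-semistab}, establish that $T_d(I)$ is tileable (via Proposition~\ref{pro:pp-tileable}), deduce semistability from Theorem~\ref{thm:tileable-semistable}, and then obtain the weak Lefschetz property from the equivalence. The difference lies entirely in how you justify the absence of over-punctured monomial subregions, and here your argument breaks down.

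The packing claim you propose---that pairwise non-overlapping, non-touching upward-pointing triangles inside an upward triangle of side $s'$ have total side length at most $s'$---is false. For a concrete counterexample, take $s' = 5$ and the six unit upward triangles in $\mathcal{T}_5$ corresponding to the monomials
\[
x^4,\quad y^4,\quad z^4,\quad x^2y^2,\quad x^2z^2,\quad y^2z^2.
\]
Every pairwise least common multiple has degree at least $6 = d+1$, so by Lemma~\ref{lem:lcm-overlap} no two of these unit punctures overlap or touch; yet their total side length is $6 > 5$. Consequently neither an induction on $s'$ nor a projection onto an edge can rescue the argument: the statement you are trying to prove is simply not true in general, and the obstacle you anticipated is fatal rather than merely technical.

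The paper does not attempt any such general packing lemma. It asserts directly that, for the \emph{specific} ideals built in Example~\ref{exa:ideals-with-wlp}, the region $T_d(I)$ has no over-punctured monomial subregions, and then invokes Proposition~\ref{pro:pp-tileable}. This relies on the particular placement of the generators $m_4,\ldots,m_t$ (roughly, the even-indexed ones sit along the row at height $d-d_3$ and the odd-indexed ones along the row with $y$-exponent $2$, spaced so that any candidate sub-triangle cannot collect too many of them). The paper also notes an alternative: exhibit an explicit family of non-intersecting lattice paths to certify tileability. To repair your proof you must replace the general packing claim with one of these construction-specific verifications.
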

\begin{proof}
    By construction, the considered ideals satisfy assumptions (i)--(iv) of Theorem~\ref{thm:wlp-iff-semistab}.
    Furthermore, the region $T_d(I)$ has no over-punctured monomial subregions. Hence, it is tileable by
    Proposition~\ref{pro:pp-tileable}. (Alternatively, one can exhibit a family of non-intersecting lattice paths to check
    tileability.) By Theorem~\ref{thm:tileable-semistable}, it follows that the syzygy bundle of $I$ is semistable, and
    hence $R/I$ has the weak Lefschetz property by Theorem~\ref{thm:wlp-iff-semistab}.
\end{proof}

\begin{remark}
    Given an integer $t \geq 3$, there are many choices for the integers $d_1,\ldots,d_t$, and thus for the ideals
    exhibited in Example~\ref{exa:ideals-with-wlp}. A convenient choice, for which the description of the ideal becomes
    simpler, is $d_1 = 2t -4$, $d_2 = d_3 = d-1$, and $d_4 = \cdots = d_t = d-2$, where $d$ is any integer satisfying
    $d \geq 2t -3$. Then the corresponding ideal is
    \begin{equation*}
        I = (x^{2t - 4}, y^{d - 1}, z^{d-1}, xyz^{d-4}, x^3 y^2 z^{d-7}, m_6,\ldots,m_t ),
    \end{equation*}
    where
    \begin{equation*}
        m_i =
        \begin{cases}
            xy^{2i -7} z^{d+4 - 2i} & \text{if } 6 \leq i \leq t \text{ and $i$ is even}\\
            x^{2i - 8} y^2 z^{d+4 - 2i} & \text{if } 7 \leq i \leq t \text{ and $i$ is odd.}
        \end{cases}
    \end{equation*}
If $d = 13$ and $t = 8$, then this gives the ideal $I$ mentioned in the introduction. Its triangular region $T_{13} (I)$ is depicted in Figure~\ref{fig:example-d-13-t-8}.
\end{remark}


\end{document}